\newtheorem{statement}[theorem]{Statement}
\begin{document}

\title{Some considerations in favor of the truth of \\Goldbach's Conjecture}

\author[C. D'Urso]{C. D'Urso\affil{1}\affil{,}\affil{2}}
\address{\affilnum{1}\ LUMSA University, 
Rome, Italy\\
\affilnum{2}\ Information Systems Development Office\\ 
ICT Department, Italian Senate,
Rome, Italy}

\email{{\tt cirodurso@ieee.org}\ (C. D'Urso)}

\begin{abstract}
This article presents some considerations about the Goldbach's conjecture (GC). The work is based on analytic results of the number theory and it provides a constructive method that permits, given an even integer, to find at least a pair of prime numbers according to the GC. It will be shown how the method can be implemented by an algorithm coded in a high-level language for numerical computation.  Eventually a correlation will be provided between this constructive method and a class of problems of operations research.
\end{abstract}

\keywords{Goldbach's Conjecture, Congruencies, Chinese remainder theorem, Operations research, Convex feasibility problem, Knapsack problem, Constraint satisfaction problem, Primality test.}
\ams{Primary 11P32, Secondary 90B99.}

\maketitle

\section{Introduction}
In his famous letter to Leonhard Euler dated 7 June 1742 \cite{key-2}, Christian Goldbach stated his famous conjecture that “every integer greater than 2 can be written as the sum of three primes”, statement that is equivalent to that every even number is a sum of two primes, as Euler stated in the letter dated 30 June 1742 ('I regard this as a completely certain theorem, although I cannot prove it'). This strong Goldbach conjecture implies the conjecture that all odd numbers greater than 7 are the sum of three odd primes, which is known today variously as the "weak" Goldbach conjecture, the "odd" Goldbach conjecture, or the "ternary" Goldbach conjecture. 

In 1923 the ternary conjecture has been proved under the assumption of the truth of the generalized Riemann hypothesis \cite{key-8}, in 1937 Vinogradov\cite{key-4} removed the dependence on the Riemann Hypothesis, and proved that this it true for all sufficiently large odd integers. In 1956 Borodzkin showed that an integer greater than a large integer is sufficient in Vinogradov's proof. In 1989 and 1996 Chen and Wang reduced this bound \cite{key-10}. Using Vinogradov's method, Chudakov,van der Corput, and Estermann \cite{key-5,key-6,key-7} showed that almost all even numbers can be written as the sum of two primes (in the sense that the fraction of even numbers which can be so written tends towards 1). In a very recent paper mathematician Terence Tao of the University of California, Los Angeles, has inched toward a proof, in fact he has shown that one can write odd numbers as sums of, at most, five primes \cite{key-113}.
Although believed to be true, the binary Goldbach conjecture is still lacking a proof. We state it as follows.

\begin{statement}\label{s1}
 Every even integer greater than 4 can be written as the sum of two primes.
\end{statement}

\section{Alternative statements of the problem} \label{alt}

Consider an even natural number $2e$, we can state the conjecture in term of the natural number $e$
  as follows.

\begin{statement}\label{s2}
For every integer $e>3$  there exists a couple of odd prime numbers $q{}_{1}$  and $q{}_{2}$  such that $e$  is their semi-sum, that is: $e = \frac{q{}_{1}+q_{2}}{2}.$
 \end{statement}

This statement is equivalent to the original one considering that $2e$, with e  a generic integer, is always an even natural, and that the original statement is trivially verified for the numbers 4 and 6.

\begin{definition}\label{d1}
Two prime numbers $q{}_{1}$  and $q{}_{2}$, with $q_{2}>e>q_{1}$, are called mirror primes respect to an integer $e$  if there exists an integer $d$  such that: $e-q_{1}=q_{2}-e=d$.
 \end{definition}

So another equivalent statement to 2.1, and of course to 1.1, is:
\begin{statement}\label{s3}
 For every natural $e>3$  there exists a pair of mirror primes respect to $e$. 
 \end{statement}
In the following we will consider, without loss of generality, $e>7$.
The statement \ref{s3}, leads us to analyze when, given a natural number $e$, there exists at least one integer $d$  defining two mirror primes respect to $e$. First of all we have to ask if and when there exist such number $d$  (if $e$ is prime the choice is trivially $d=0$). Moreover we are interested in an integer such that the two integers $e\pm d$  are primes (observe that the primality can be checked by means of an efficient algorithm currently available \cite{key-1}). 

\section{Notation} \label{nt}
Let observe that in the following we use that notation:

With the symbol $\pi(x)$  we denote the number of primes less than or equal to $x$.

With the expression $a\equiv b\:(mod\, c)$  we state that $a$  is congruento to $b$  modulo $c$.

With the expression $a\equiv b\:(mod\, c)$  we state that $a$  is not congruento to $b$  modulo $c$.

With the expression $(n,m]$  we indicate the interval open on left and close on right.

With the expression $a\nmid b$  we denote that $a$  doesn't divide $b$.

With the exprression $|\{a_{i}\}|$ we denote the cardinality of the set of elements $\{a_{i}\}$.

In some passages the symbol $[x]$  indicates the greatest integer less then or equal to $x$, but in general the square brackets are used to group symbols in an expression. 

\section{Plan of the work} \label{pp}
The aim of the article is to calculate the number $d$  such that the two integers $e\pm d$ were primes. We observe that if the number $d$  is such that it satisfies the $\pi(\sqrt{2e})$  relations $d\, not\equiv\pm e\left(mod\, p_{i}\right)$ this fact alone represents a sufficient condition for the desired result. Therefore the results provided in the following sections can be summarized by these two logical steps:

1. Given an even natural number $2e$  it will be provided a method in order to calculate an integer $d$  such that $e\pm d\, not\equiv0\,(mod\, p_{i})$  where $p_{i}$ are the $\pi(\sqrt{2e})$ primes less than or equal to $\sqrt{2e}$. This assure that the two integers $e\pm d$  are prime numbers. This step depends on the choice of a set of positive integers $b{}_{i}$
and on the choice of their signs (+/-). 

2. It will be provided a method in order to calculate at least one set of integer $b{}_{i}$  such that $b{}_{i}not \equiv\pm e\:(mod\, p_{i})$, and considering that $d\equiv b_{i}\:(mod\, p_{i})$ the consequence is that $d\, not\equiv\pm e\left(mod\, p_{i}\right)$. This step, in combination with the step 1, allow us to obtain the number $d$  greater than $-e$
 and less than $e$. Consequently we obtain a pair of mirror primes respect to $e$, that is: $(e-d) + (e+d) = 2e$ .

\section{The Chinese Remainder Theorem and the number $d$} \label{crt}
About the existence of prime numbers in the intervals definened by the number $e$, that is: $\left(0,e\right)$ and $\left(e,2e\right)$, the following result holds. 
\begin{lemma} \label{l1}
Given an arbitrary integer $e>7$ there is at least one prime number in the intervals $\left(0,e\right)$ and $\left(e,2e-2\right)$.
\end{lemma}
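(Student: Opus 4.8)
The plan is to handle the two intervals separately and to reduce the statement to Bertrand's postulate together with one of its classical quantitative refinements.

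The interval $(0,e)$ is immediate: since $e>7$, the primes $2,3,5,7$ all lie strictly between $0$ and $e$, so this interval certainly contains a prime and there is nothing to prove.

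The real content is the interval $(e,2e-2)$. Bertrand's postulate already supplies a prime $p$ with $e<p<2e$ for every $e\ge 1$, but here the upper endpoint has to be tightened to $2e-2$. First I would invoke a sharpened version — for instance Nagura's theorem, which asserts that for every integer $n\ge 25$ there is a prime $p$ with $n<p<\frac{6}{5}n$. Since $\frac{6}{5}e<2e-2$ as soon as $e>\frac{5}{2}$, every such prime in fact lies in $(e,2e-2)$, which disposes of all $e\ge 25$. The remaining cases $8\le e\le 24$ form a finite list, and I would dispatch them by direct inspection, exhibiting for each value an explicit prime in $(e,2e-2)$ — for example $p=11$ serves for $e=8,9$, $p=13$ for $e=10,11,12$, $p=23$ for $15\le e\le 22$, and so on; equivalently one verifies $\pi(2e-3)-\pi(e)\ge 1$ in these finitely many instances.

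Together the two parts give the lemma. The only genuine input is Bertrand's postulate (or the factor-$\frac{6}{5}$ refinement); if one insisted on a fully self-contained argument, the bulk of the work would be Erdős's elementary bounds on the central binomial coefficient $\binom{2n}{n}$, from which Bertrand's postulate — and, with a bit more care, Nagura's inequality — follows. I do not anticipate any real obstacle here: the lemma is essentially a repackaging of standard facts, and it serves only to guarantee that the intervals $(0,e)$ and $(e,2e)$ in which the sought mirror primes must lie are not already devoid of primes.
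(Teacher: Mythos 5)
Your argument is correct, and the treatment of $(0,e)$ matches the paper's (both just exhibit small primes below $e$). For the interval $(e,2e-2)$, however, you take a detour the paper does not need: the paper simply cites Bertrand's postulate in its \emph{original} formulation --- the statement Bertrand actually conjectured and Chebyshev proved is precisely that for every integer $n>3$ there is a prime $p$ with $n<p<2n-2$ --- so the endpoint $2e-2$ comes for free and no refinement is required. You instead start from the weaker folklore form ($n<p<2n$), observe that the upper endpoint must be tightened, and patch this with Nagura's theorem ($n<p<\frac{6}{5}n$ for $n\ge 25$) plus a finite check of $8\le e\le 24$. This is perfectly rigorous and even yields slightly more than is needed, but it invokes a genuinely deeper result where the classical one already suffices; the only thing the paper's one-line proof asks of the reader is awareness that the sharp endpoint $2n-2$ is part of the classical Bertrand--Chebyshev statement rather than an add-on. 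If you prefer to keep your argument self-contained at the level of Erd\H{o}s's binomial-coefficient proof, your route is a reasonable way to avoid relying on the sharper classical phrasing, at the cost of the extra case analysis.
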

\begin{proof} In the first interval we have trivially at least the first two primes. Regarding the second interval, actually it is the Bertrand's postulate, also called the Bertrand-Chebyshev theorem or Chebyshev's theorem: it states that if $e>3$, there is always at least one prime $p$  between $e$  and $2e-2$ \cite{key-103}.$\square$
\end{proof}

Furthemore we can ask if there are particular relations among $q{}_{1}, q_{2}, e, and d$. We can easily observe that the mirror primes can not be prime factors neither of the natural number $2e$ nor of $d$. We summarize those remarks in the following Lemma. 
 \begin{lemma} \label{l2}
Given $e$, even integer number, for all pairs of mirror primes respect to $e$ it holds: 
$\mathit{(a)}  q{}_{1} \nmid  e; 
\mathit{(b)}  q{}_{2} \nmid 2e; 
\mathit{(c)}  q{}_{1} \nmid d.$
\end{lemma}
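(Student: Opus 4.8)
The plan is to build everything on the one algebraic identity packaged in Definition~\ref{d1}: a pair of mirror primes $q_1,q_2$ with respect to $e$ satisfies $q_1=e-d$, $q_2=e+d$, and hence $q_1+q_2=2e$. First I would record two elementary remarks. Since $q_2>e>q_1$ we have $q_1\neq q_2$. And since $e$ is even, neither mirror prime can equal $2$: if $q_1=2$ then $q_2=2e-2$ is an even integer greater than $2$, hence composite, contradicting that $q_2$ is prime (and symmetrically for $q_2$). So $q_1$ and $q_2$ are in fact distinct odd primes. The first remark is what will actually be used in the argument; the second just closes an otherwise worrying edge case.

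For (a) I would argue by contradiction. Assume $q_1\mid e$. Then $q_1\mid 2e=q_1+q_2$, and since trivially $q_1\mid q_1$ we get $q_1\mid q_2$. Because $q_2$ is prime and $q_1\geq 2$, this forces $q_1=q_2$, contradicting $q_1<q_2$; hence $q_1\nmid e$. Part (b) is the mirror‑image argument: if $q_2\mid 2e=q_1+q_2$ then $q_2\mid q_1$, and primality of $q_1$ together with $q_2\geq 2$ gives $q_2=q_1$, the same contradiction; so $q_2\nmid 2e$ (and in particular $q_2\nmid e$). For (c) I would reduce to (a): Definition~\ref{d1} gives $e=q_1+d$, so if $q_1\mid d$ then $q_1\mid(q_1+d)=e$, contradicting part (a); hence $q_1\nmid d$.

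I do not expect any genuine obstacle: each claim collapses to the fact that a prime dividing a prime must equal it. The only point deserving a moment's care is the degenerate possibility $q_1=2$ or $q_2=2$, which I would dispose of up front using the parity of $e$, together with the strict inequalities $q_1<e<q_2$, which are exactly what make $q_1=q_2$ impossible in the contradiction steps.
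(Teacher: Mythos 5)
Your proof is correct and follows essentially the same route as the paper: parts (a) and (b) use the identity $q_1+q_2=2e$ exactly as the paper does, and your extra care about $q_1\neq q_2$ and the parity edge case only tightens what the paper leaves implicit. The one small divergence is in (c), where the paper deduces the contradiction from $q_2=q_1+2d$ while you reduce to part (a) via $e=q_1+d$; both are one-line variants of the same idea.
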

\begin{proof} We prove $\mathit{(a)}$ by reductio ad absurdum. We have: $q{}_{2}= 2e - q{}_{1}$, and if $q{}_{1} \mid e$ then $q{}_{1} \mid q{}_{2}$, but $q{}_{2}$ is prime and so it is impossible. The statement $\mathit{(b)}$ follows from the identity $q{}_{1} = 2e - q{}_{2}$ and from the fact that $q{}_{1}$ is by hypothesis prime. Similarly the third statement follows from observing that $q{}_{2} = q{}_{1}+ 2d.\square$
\end{proof}

Remembering that a prime number is a natural number greater than 1 that has no positive divisors other than 1 and itself, it is easily to infer that every composit number $n$ has a prime factor less than or equal to $\sqrt{n}$. In fact if $n$ has a prime factor less than or equal to its square root it is a composite number. Conversely let suppose for simplicity that $n$ could be written, according to the fundamental theorem of arithmetic \cite{key-2,key-102}, as the product of two primes $a$ and $b$. If both of these two numbers were greater than $\sqrt{n}$ we should have that: $n=a\cdot b>\sqrt{n}\cdot\sqrt{n}=n$, that is $n>n$, and it is not possible. The underlying rationale can be generalized to an arbitrary number of prime factors.
So at least one prime divisors of each of the two quantity $e\pm d$, if composite, are in the interval $\left[1,\sqrt{e\pm d}\right]$. On the other hand if $e\pm d$ are not divisible by any prime less then or equal to $\sqrt{e\pm d}<\sqrt{2e}$ then they can not be a composite integer, so it has to be prime.
The following lemma introduces the calculation method for the number d depending on an arbitrary choice of suitable integers $b_{i}$.

\begin{lemma} \label{l3}
Given a natural number $e$ and a set of $k=\pi(\sqrt{2e})$ arbitrary integers $b{}_{i}$, there exists an integer number $d$ solution of the following system of congruencies:

\begin{center}
$d\equiv b_{i}\:(mod\, p_{i}),\: i=1..k$ 
\end{center}

where $p{}_{i}$ are the $k=\pi(\sqrt{2e})$ primes less then or equal to $\sqrt{2e}$.
\end{lemma}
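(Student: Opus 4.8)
The plan is to recognize this as a direct instance of the Chinese Remainder Theorem, so that the proof reduces to checking its hypotheses and, if we want a self-contained argument, exhibiting the solution explicitly. First I would observe that the moduli $p_1,\dots,p_k$ are, by construction, the distinct primes not exceeding $\sqrt{2e}$; being distinct primes, they are pairwise coprime, i.e. $\gcd(p_i,p_j)=1$ whenever $i\neq j$. Pairwise coprimality of the moduli is precisely the hypothesis the Chinese Remainder Theorem requires, and it holds automatically in our situation.

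Next I would invoke the Chinese Remainder Theorem in the standard form: if $n_1,\dots,n_k$ are pairwise coprime positive integers and $b_1,\dots,b_k$ are arbitrary integers, then the simultaneous system $x\equiv b_i\pmod{n_i}$, $i=1,\dots,k$, admits a solution $x$, unique modulo $N=\prod_{i=1}^{k}n_i$. Applying this with $n_i=p_i$ yields at once an integer $d$ with $d\equiv b_i\pmod{p_i}$ for every $i$, which is exactly the assertion of the lemma.

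For completeness — and because the later sections actually compute $d$ — I would also record the constructive formula behind the theorem. Put $P=\prod_{i=1}^{k}p_i$ and $P_i=P/p_i$. Since $p_i$ divides no $p_j$ with $j\neq i$, we have $\gcd(P_i,p_i)=1$, so there is an integer $y_i$ with $P_i y_i\equiv 1\pmod{p_i}$, obtained for instance via the extended Euclidean algorithm. Then $d=\sum_{i=1}^{k}b_i P_i y_i$ satisfies $d\equiv b_i\pmod{p_i}$ for each $i$, because $p_i\mid P_j$ for $j\neq i$ annihilates every term except the $i$-th modulo $p_i$, and that term is congruent to $b_i$.

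There is essentially no obstacle to overcome here: the statement is a textbook corollary of the Chinese Remainder Theorem, and the only point that needs noting is the (immediate) pairwise coprimality of the chosen prime moduli. The genuine difficulty of the paper does not reside in this lemma but in the subsequent step of selecting the $b_i$ so that the resulting $d$ lies in $(-e,e)$ while avoiding the forbidden residues $\pm e\pmod{p_i}$.
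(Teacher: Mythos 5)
Your proof is correct and follows essentially the same route as the paper: both note the pairwise coprimality of the distinct prime moduli, invoke the Chinese Remainder Theorem, and record the explicit solution $d=\sum_i b_i P_i P_i'$ with $P_i=P/p_i$ and $P_iP_i'\equiv 1\pmod{p_i}$. No further comment is needed.
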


\begin{proof}
 Observing that $\left(p{}_{i},p_{j}\right)=1$ for $i\neq j$, the Chinese Remainder Theorem (CRT) \cite{key-101} tell us that the system has a solution congruent to $p{}_{1}\cdot p_{2}\cdot...\cdot p_{k}$. The solution can be calculated by the following formula:
$d =\underset{1\leq i\leq k}{\sum}b_{i}P_{i}P'_{i}$, where: $P=p{}_{1}\cdot p_{2}\cdot...\cdot p_{k} ,P{}_{i}=P/p_{i}$, and $P'_{i}$ is the inverse of $P_{i}$, that is: $P{}_{i}P'{}_{i}\equiv1(mod\: p_{i}).\square$ \hspace*{\fill}
\end{proof}

 Let note that the right terms of several congruencies could be considered as negative integers, in this case the result of the lemma is still valid (that is, supposing $b_{i}>0$, we would have: $d\equiv-1\cdot b_{i}\:(mod\, p_{i})$ for some $i\in[1..k]$, and the correspondent terms of the expession of the solution would be modified accordingly multiplying by $-1$).
Supposing that we can choose a set of integer numbers $b{}_{i}$ with $i=1..\pi\left(\sqrt{2e}\right)$ such that $b{}_{i}not\equiv\pm e\left(mod\, p_{i}\right)$, then the number $d$ calculated by the foregoing method, if it is in the interval $\left[-(e-2),+(e-2)\right]$, is such that:
$d\equiv b_{i}\:(mod\, p_{i}),\, and\: b{}_{i}not\equiv\pm e\left(mod\, p_{i}\right),\:\Rightarrow d\, not\equiv\pm e\left(mod\, p_{i}\right)$, that guarantees $e\pm d$ being a pair of primes. \hspace*{\fill}

\section{How to calculate the integers $b{}_{i}$} \label{bi}
The system of congruencies defined in the  Lemma  \ref{l3}  provides an integer $d$ that depends on the choice of the $k$ integers $b{}_{i}$. Let us try to state the requirements they must meet. First of all we want the followings $k$ conditions hold: $b{}_{i}not \equiv\pm e\:(mod\, p_{i})$, in order to assure that $e\pm d\, not\equiv0\,(mod\,p_{i})$, that is $p_{i}\nmid e\pm d$, which ensures that $e\pm d$ will be prime numbers. A possible choice for the number $b{}_{i}$ could be that provided by the following lemma.

\begin{lemma} \label{l4}
Let $e$ be a positive integer, we denote $p{}_{1},p_{2},...p_{k}$ the first $k$ primes with $k=\pi(\sqrt{2e})$, then the quantity defined as follows are not congruent to $\pm$ e modulo $p_{i}$ for each $i : b{}_{i}=\left[\frac{e}{p_{i}}\right]p_{i}$, if $ p_{i}\nmid e$, and $b{}_{i}=\left[\frac{e}{p_{i}^{_{\alpha_{i}}}}\right] p_{i}+1$, if $p{}_{i}\mid e$ with $\alpha{}_{i}$ the i-th prime power according to the fundamental theorem of arithmetic \cite{key-2, key-102}, and $\left[x\right]$ indicates the greatest integer less then or equal to $x$.
\end{lemma}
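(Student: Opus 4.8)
The plan is to verify directly the two asserted non-congruences, $b_i \not\equiv e\;(mod\, p_i)$ and $b_i \not\equiv -e\;(mod\, p_i)$, by computing the residue of $b_i$ modulo $p_i$ in each of the two cases of the definition and comparing it with the residue of $\pm e$. No deep machinery is needed here --- in particular the Chinese Remainder Theorem of Lemma~\ref{l3} is not used in this lemma; the argument rests only on elementary facts about divisibility and on the behaviour of the floor function $\left[\,\cdot\,\right]$.

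First I would dispose of the case $p_i \nmid e$, in which $b_i = \left[\frac{e}{p_i}\right] p_i$. By construction $b_i$ is an integer multiple of $p_i$, hence $b_i \equiv 0\;(mod\, p_i)$. It therefore suffices to note that $e \not\equiv 0\;(mod\, p_i)$ and $-e \not\equiv 0\;(mod\, p_i)$, both of which are immediate from the standing hypothesis $p_i \nmid e$ in this case. Consequently $b_i \not\equiv \pm e\;(mod\, p_i)$.

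Next I would treat the case $p_i \mid e$, in which $b_i = \left[\frac{e}{p_i^{\alpha_i}}\right] p_i + 1$, where $\alpha_i$ is the exact exponent of $p_i$ in the prime factorisation of $e$. The key observation is that, with this choice of exponent, $e/p_i^{\alpha_i}$ is a positive integer $m$ with $p_i \nmid m$, so the floor is redundant: $\left[\frac{e}{p_i^{\alpha_i}}\right] = m$, and therefore $b_i = m\,p_i + 1 \equiv 1\;(mod\, p_i)$. On the other hand $p_i \mid e$ gives $e \equiv -e \equiv 0\;(mod\, p_i)$, and since $p_i \geq 2$ we have $1 \not\equiv 0\;(mod\, p_i)$; hence again $b_i \not\equiv \pm e\;(mod\, p_i)$, which completes the argument.

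Since the statement is elementary, there is no genuine obstacle; the only place that demands a little care is the second case, where the exponent $\alpha_i$ must be taken so that $p_i^{\alpha_i}$ divides $e$ \emph{exactly} --- it is precisely this that makes $\left[\frac{e}{p_i^{\alpha_i}}\right] p_i$ a true multiple of $p_i$ and hence forces $b_i \equiv 1\;(mod\, p_i)$; had one instead written $\left[\frac{e}{p_i}\right] p_i + 1$ in the case $p_i \mid e$, the residue of $b_i$ modulo $p_i$ would not in general be $1$ and the conclusion could fail. As a byproduct one may also record that, since $p_i \le \sqrt{2e}$ forces $e \ge p_i$, each $b_i$ above is a positive integer, so it is a legitimate input to the system of congruencies of Lemma~\ref{l3}.
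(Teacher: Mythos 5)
Your proof is correct and follows essentially the same elementary route as the paper: in the case $p_i \nmid e$ one has $b_i \equiv 0$ while $\pm e \not\equiv 0 \pmod{p_i}$, and in the case $p_i \mid e$ one has $b_i \equiv 1$ while $\pm e \equiv 0 \pmod{p_i}$, which is exactly the content of the paper's contradiction argument. One small correction to your closing aside: the claim that replacing $\left[\frac{e}{p_i^{\alpha_i}}\right]p_i+1$ by $\left[\frac{e}{p_i}\right]p_i+1$ could break the conclusion is not right, since $\left[x\right]p_i$ is an integer multiple of $p_i$ for \emph{any} $x$, so either choice gives $b_i\equiv 1\pmod{p_i}$ and the exponent $\alpha_i$ is in fact irrelevant to the congruence.
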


\begin{proof} First consider the case in which $p_{i}\nmid e$. Observe that the expression $e-\frac{b_{i}}{p_{i}}$ represents the remainder after the division $\frac{e}{p_{i}}$, so it is not divisible by $p_{i}$. If $e\pm b_{i}$ was divisible by $p_{i}$ we could write: $e=(h\mp\left[\frac{e}{p_{i}}\right])p_{i}$, with $h$ a generic integer, but this would mean that $p{}_{i}\mid e$, contra hypothesis. Now suppose $p_{i}\mid e$, similarly if $e\pm b_{i}$ was divisible by $p_{i}$ we could write: $e=(h\mp\left[\frac{e}{p_{i}^{_{\alpha_{i}}}}\right])p_{i}+1$, with $h$ a generic integer, but but this would mean that $p{}_{i}\nmid e$, contra hypothesis.$\square$
\end{proof}

Unfortunately the choice of the numbers $b_{i}$ as indicated by lemma \ref{l4} doesn't assure that $d$ is less than our integer $e$, let see for instance the following sidebar related to the integer 68.
\\
\framebox{\begin{minipage}[t]{1\columnwidth}%
\begin{center}
$e=68$
\par\end{center}

\begin{center}
\begin{tabular}{|c|c|c|c|}
\hline 
$p{}_{i}$ & $P{}_{i}$ & $P{}_{i}'$ & $b{}_{i}$\tabularnewline
\hline 
\hline 
2 & 1155 & 1 & 35\tabularnewline
\hline 
3 & 770 & 2 & 66\tabularnewline
\hline 
5 & 462 & 3 & 65\tabularnewline
\hline 
7 & 330 & 1 & 63\tabularnewline
\hline 
11 & 210 & 1 & 66\tabularnewline
\hline 
\end{tabular} 
\par\end{center}

\begin{center}
$d=266805\,(mod\,2310)=1155$. $p{}_{1}=d-e=1087$, $p{}_{2}=d+e=1223$,
both prime numbers.
\par\end{center}%
\end{minipage}}
\begin{center}{\label{e68}Sidebar 1. Lemma 4 applied to the number $e=68$}
 \end{center}  

In order to avoid values of $d$ greater than or equal to $e-2$ and, because the solution $d$ could be negative depending on the choice of the sign of $b_{i}$, less than or equal to $-(e-2)$, the following constraints must hold:
\\
\begin{center}
$Q_{k}\cdot B^{T}\leq e-2$ 
\\
$Q_{k}\cdot B^{T}\geq-(e-2)$

where:

$B_{1\times k}= \left(\pm b{}_{1}\:\pm b_{2}\:\pm b_{3}...\pm b_{k}\right), \;$  
\\

$Q_{k\times k}=\begin{bmatrix}P{}_{1}\cdot P_{1}' & 0 & 0 & ... & 0\\P{}_{1}\cdot P_{1}' & P{}_{2}\cdot P_{2}' & 0 & ... & 0\\... & ... & ... & ... & ...\\P{}_{1}\cdot P_{1}' & P{}_{2}\cdot P_{2}' & P{}_{3}\cdot P_{3}' & ... & P{}_{k}\cdot P_{k}'\end{bmatrix}$,  \\
with $det(Q)\neq0.$  
\end{center}
Observe that the solution $d$ of the system of congruencies of lemma \ref{l3} is $Q_{k}\cdot B^{T}$, where $Q_{k}$ is the k-th row of the matrix $Q$, where $k=\pi(\sqrt{2e})$.
\\
In order to  calculate the $k$ sets of integer $b{}_{i}$ such that $b{}_{i}not \equiv\pm e\:(mod\, p_{i})$ we define the following sets of indexes $i$:
\begin{definition} \label{d2} Let define the sets: $I{}_{S}=\{i: p{}_{i}\mid e\}$ and $I{}_{R}=\{i: p{}_{i}\nmid e\}$, where $1\le i \le \pi(\sqrt{2e})$. Regarding the choice of $b{}_{i}$ we have: \\
If the index $i$ is in $I{}_{S}$ then we choose: $b{}_{i} \in  \{ 1 \le j \le e-2:  p{}_{i} \nmid ( e\pm j ) \}$, \\
If the index $i$ is in $I{}_{R}$ then we chose: $b{}_{i} \in \{0\} \cup \{ 1 \le j \le e-2:  p{}_{i} \nmid ( e\pm j ) \}.$ 

From these sets  $\{b_{j}\}_{i}, i=1..k$, considering that for each positive element $+b_{ij}$ there is also the corrisponding negative one $b_{ij+1} = $ $-b_{ij}$, we can define the correspondet sets $\{w_{j}\}_{i}$ as $w_{i j}=b_{ij}\cdot P_{i}\cdot P'_{i},\: i=1..k, j= 1..|\{b_{j}\}_{i}|$.
\end{definition}

\begin{remark}\label{r1}
Given an arbitrary integer $e>7$, if there exists a choice of the quantities $b_{i}$, calculated according to definition \ref{d2} (so that, if not equal to zero, they are positive and negative quantities), that satisfies the following relations:

\begin{center}
\begin{equation}
Q_{k}\cdot B^{T}\leq e-2
 \end{equation}
\begin{equation}
Q_{k}\cdot B^{T}\geq-(e-2)
\end{equation}
\end{center}

where $d=Q_{k}\cdot B^{T}$ is the solution of the system of $k$ congruencies: $d\equiv b_{i}\:(mod\, p_{i})$, then the two integers $e-d$ and $e+d$ are prime numbers, that is there exists a pair of mirror primes respect to the integer $\mathit{e}$.
\end{remark}

There is an obvious way to formulate the problem as a constraint satisfaction problem (CSP)  \cite{key-150}  \cite{key-152}  \cite{key-154}, that is the research of a value, selected from a given finite domain, to be assigned to each variable so that all constraints relating the variables are satisfied. Defining $x_{ij}=1$ if it is chosen the value $w_{ij}$ we have:

\begin{center}
\begin{equation}
d=\underset{1\leq i\leq k}{\sum} \underset{1\leq j\leq |\{b_{h}\}_{i}|}{\sum}w_{ij}\cdot x_{ij}
\end{equation}
\begin{equation}
d \in [-(e-2), (e-2)]
\end{equation}
\begin{equation}
\underset{1\leq j\leq |\{b_{h}\}_{i}|}{\sum}x_{ij}=1, \forall i
\end{equation}
\begin{equation}
x_{ij} \in \{0, 1\}
\end{equation}
\end{center}

The definition \ref{d2} implies that if $e$ is a prime number then all the sets ${\{b{}_{j}\}}_i$ have an element equal to zero, and the choice of $d$ is trivially $d=0$. The algorithm provided in the following check this zero-configuration as the first choice so it is able to determine the primality of the given number.
The enormous number of combinations of the elements $b{}_{i}$ that is possible to choose could be explored by a simple backtracking algorithm, and an example is priovided by the pseudo-code in the  sidebar 2.\\

Obviously this strategy is not an efficient option when the number $e$ becomes large, in fact the search space becomes too large to search exhaustively. One way to reduce the combinations explored consists in gradually expanding the research domain, for example by one unit at each step for all the $k$ sets, depending on a random choice. We can also ignore unfeasible nodes (combinations for which the number $d$ is not in the feasible interval) observing that if $d$ is greater than $e-2$ when the current $w$ is greater than zero we can jump to the next value of $w$ (negative by definition). The algorithm modified in this way has been proven to be already more efficient for small number as reported in table 1. That is probably due to the fact that the first combination of number $w_{ih}$ that satisfied both of the inequalities 1 and 2 is obtained for indexes $j <<  |\{b_{j}\}_{i}|$. This is just a conjecture, directly derived from the original GC, namely a stronger form of GC. Once verified by computer that the new conjecture is indeed satisfied up to some enormous number, it may be easier to analyze this stronger form than the original GC. See Table 1 for a comparison of the two algorithms in terms of execution time for some small numbers (algorithms executed on an OiS platform with Intel Core 2 Duo SL9600).
So this conjecture can be writte as:
\begin{statement}\label{s4}
For every integer $e>7$  if there exists one or more choices of ${w_{ij}}$ such that $d = \underset{1\leq i\leq k}{\sum w_{i}}$ is in the interval $[-(e-2), (e-2)]$, then for at least one choice we have that all the k numbers $|b_{i}|$ of the terms ${w_{i}}$  ( $|x|$ means here the absolute value of $x$) are less than or equal to $\sqrt[k/r]{e}$, with $r$ a real less than $k$.
 \end{statement}

This upper bound for the number of element of each set $\{b_{j}\}_{i}$ imply that the run-time complexity of the second algorithm is of $O(e^r)$, with $r < k$, much less than the case of the first one.

\begin{remark}\label{r2}
It could be tried to prove the conjecture by induction. If we consider the base case as $e=8$, we have that $d=3$ is obtained by the first and the second $b_{i}$, in particular as $b_{11}=1$, $b_{22}=-3$, $d=3$ and $r=2/3$. If we suppose the conjecture true for $e$ we have that for $e+1$ holds for $ i\in [1,k(e)]$:
\begin{center} 
$P_{i}(e+1)=P_{i}(e)$, if $k(e)=k(e+1)$\\
$P_{i}(e+1)=P_{i}(e) p_{k(e+1)}$, if $k(e)<k(e+1)$
\end{center} 
In the first case it is easy to demonstrate that for each indexes in the definition 2 related to the integer $e+1$, it can be chosen either the same index as for $e$ or one of the indexes $j-1$ or $1-j$, so that the number $d$ is still in the desiderable interval, and  the fact that the k numbers $|b_{i}|$ for the number $e$ are less than or equal to $\sqrt[k/r]{e}$ implies that the k numbers $|b_{i}|$ for the number $e+1$ are less than or equal to $\sqrt[k/r]{e+1}$.
The second case is more tricky, and we decided to attack it in a next research.
\end{remark}

\framebox{\begin{minipage}[t]{1\columnwidth}
\vspace{0pt}
 Inizialize $d$, the vector $c$ (pointer to the current elemens of the set $\{w_{i}\}_{h}, h=1..k$)\\
{\bf while}   $d \notin [-(e-2), (e-2)]$
\begin{quote}
h=k; flag=1;\\
	{\bf while} (flag==1)
\begin{quote}
		$d = d - {w_{h,c(h)}}$;\\
		c(h) = c(h) + 1;  /move forward \\
		{\bf if} c(h) $> $ $ |\{b_{i}\}_{h}|$
\begin{quote}
                                flag = 1;\\
                                c(h) = 1;\\
			$d = d + {w_{h,c(h)}}$;\\
			h = h - 1;
\end{quote}
		{\bf else}
\begin{quote}
			flag = 0;\\
			$d = d + {w_{h,c(h)}}$;
\end{quote}
		{\bf end}
\end{quote}
	{\bf endwhile}
\end{quote}
{\bf endwhile}

\end{minipage}}

\begin{center}{\label{deepbf}Sidebar 2. Algorithm based on a backtracking  'depth-first' search}
 \end{center}  

The core of the algorithms provided so far can be applied together with a simple heuristic as well. In particular we can focus the search in a neighborhood of the values $w_{i}$ choosen such that they be of the same order of $w_{1}$, for all the $w_{1j}$.
Also the ordering of the set of $w$ can influence the performance of the search, and we can order the rows $w_{i}$ in descending order respecting to the mean of the elements of each row so that the first row contains the biggest values on average.  $i$.

\framebox{\begin{minipage}[t]{1\columnwidth}
\vspace{0pt}\raggedright
Inizialize $d$, the vector $c$  (pointer to the current elemens of the set $\{w_{i}\}_{h}, h=1..k$), the vector $depth$ \\
{\bf while} $d \notin [-(e-2), (e-2)]$
\begin{quote}
$depth_{i}=min(depth_{i} + 2 * rand(1,k),  |\{b_{j}\}_{i}|)$;\\
{\bf while}  (h$>$0)
\begin{quote}
	h = k;
	flag = 1;\\
	{\bf while} (flag==1)
\begin{quote}
		$d = d - {w_{h,c(h)}}$;\\
		c(h) = c(h) + 1;\\
		{\bf while} the node has been already explored
		\begin{quote}
                                    c(h)=c(h)+1;  /move forward
		\end{quote}
		{\bf endwhile}\\
		{\bf if} c(h) $>$ depth(h)
\begin{quote}
                                flag = 1;\\
                                c(h) = 1;\\
			$d = d + {w_{h,c(h)}}$;\\
			h = h - 1;
\end{quote}
		{\bf else}
\begin{quote}
			flag = 0;\\
			{\bf if} a unfeasible node has been reached {\bf then} 
			\begin{quote}
			c(h)=c(h)+1; /move forward
			\end{quote}	
			$d = d + {w_{h,c(h)}}$;
\end{quote}
		{\bf end}
\end{quote}
	{\bf endwhile}
\end{quote}
{\bf endwhile}
\end{quote}
{\bf endwhile}

\end{minipage}}

\begin{center}{\label{deepbf}Sidebar 3. Algorithm based on a forward checking approach}
 \end{center}

\framebox{\begin{minipage}[t]{1\columnwidth}

Inizialize $d$, the vector $c$  (pointer to the current elemens of the set $\{w_{i}\}_{h}, h=1..k$), the vector $depth$ \\
{\bf while} $d \notin [-(e-2), (e-2)]$

\begin{quote}
c(1)=c(1)+1;\\
/* Let's consider just the positive value of ${w_{h,c(h)}}$:\\
{\bf while}  $ ({wabs_{h,c(h)}} < 0.98 * {wabs_{1,c(1)}})$ AND $ ( c(h) <   |\{b_{j}\}_{h}|)$
\begin{quote}
c(h) = c(h) + 1;
\end{quote}
{\bf endwhile} 
\\
depth(h)=c(h)-1;\\
{\bf while}  (h$>$0)
\begin{quote}
	h = k;
	flag = 1;\\
	{\bf while} (flag==1)
\begin{quote}
		$d = d - {w_{h,c(h)}}$;\\
		c(h) = c(h) + 1;\\
		{\bf if} c(h) $>$ depth(h)+2
\begin{quote}
                                flag = 1;\\
                                c(h) = 1;\\
			$d = d + {w_{h,c(h)}}$;\\
			h = h - 1;
\end{quote}
		{\bf else}
\begin{quote}
			flag = 0;\\
			$d = d + {w_{h,c(h)}}$;
\end{quote}
		{\bf end}
\end{quote}
	{\bf endwhile}
\end{quote}
{\bf endwhile}
\end{quote}
{\bf endwhile}

\end{minipage}}

\begin{center}
{\label{deepbf}Sidebar 4. Algorithm based on a simple heuristic that doesn't explore the entire tree}
 \end{center}

\begin{table}[htbp]

\centering
\fbox{%
\begin{tabular}{c|cc|cc}
& \multicolumn{2}{c|}{$Algorithm_{1}$} & \multicolumn{2}{c}{$Algorithm_{2}$} \\
Number e & $t (sec)$ & $ d$ & $t (sec)$ & $d$\\ \hline
$68$ & $<1$ & $15$ & $<1$ & $15$ \\
$188$ & $249$ & $-105$ & $132$ & $-105$ \\
$273$ & $>3600$ & $-$ & $646$ & $206$ \\
$368$ & $>3600$ & $-$ & $590$ & $-231$
\end{tabular}}
\caption{\label{tab_simmses} Comparison of the two algorithms for some small integers}
\end{table}

\newpage
Another way to view the problem is the following.
In order to calculate the quantity $d$ according to the lemma  \ref{l3}, we have to calculate the set of $b_{i}$ (positives by construction) as stated in definition \ref{d2}. Once obtained these positive integers, we have to attribute the appropriate sign to each one. The choice of the integer $+b_{i}$ versus $-b_{i}$ can be viewed as the choice of two sets of binary variable $x_{i}$ and $y_{i}$, where:
\begin{center}
$x_{i}=\begin{cases}
0 & \Leftrightarrow y_{i}=1\Leftrightarrow we\: choose\:-b_{i}\\
1 & \Leftrightarrow y_{i}=0\Leftrightarrow we\: choose\:+b_{i}
\end{cases}$
 \end{center}  
The constrains (1) and (2) can be written as:
\begin{center}
\begin{equation}
\underset{1\leq i\leq k}{\sum}w_{i}\cdot x_{i}+\underset{1\leq i\leq k}{\sum(-w_{i})\cdot y_{i}}\leq e-2
\end{equation}
\begin{equation}
\underset{1\leq i\leq k}{\sum}w_{i}\cdot x_{i}+\underset{1\leq i\leq k}{\sum(-w_{i})\cdot y_{i}}\geq-(e-2)
\end{equation}
 \end{center}  
where:
\begin{center}
$w_{i}=b_{i}\cdot P_{i}\cdot P'_{i},\: i=1..k$.
\end{center}  

It may be observed that $y_{i}=1-x_{i}$, and then the preceding constrains, after substituting $y_{i}=1-x_{i}$, can be written as (in the following the symbol [.] represents just parentheses):
\begin{center}
\begin{equation}
\underset{1\leq i\leq k}{\sum}w_{i}\cdot x_{i}\leq\frac{1}{2}\left[(e-2)+\underset{1\leq i\leq k}{\sum w_{i}}\right]
\end{equation}
\begin{equation}
\underset{1\leq i\leq k}{\sum}w_{i}\cdot x_{i}\geq\frac{1}{2}\left[-(e-2)+\underset{1\leq i\leq k}{\sum w_{i}}\right]
\end{equation}
\end{center} 

Therefore it must be proved that, under the assumption that $e\geq3$, the feasible set defined by the constraints is not empty. In other words, at least one choice of the binary variables $x_{i}$ satisfies:
\begin{center}
\begin{equation}
\underset{1\leq i\leq k}{\sum}w_{i}\cdot x_{i}\leq\frac{1}{2}\left[(e-2)+\underset{1\leq i\leq k}{\sum w_{i}}\right]=\frac{1}{2}\underset{1\leq i\leq k}{\sum}w_{i}+E
\end{equation}
\begin{equation}
\underset{1\leq i\leq k}{\sum}w_{i}\cdot x_{i}\geq\frac{1}{2}\left[-(e-2)+\underset{1\leq i\leq k}{\sum w_{i}}\right]=\frac{1}{2}\underset{1\leq i\leq k}{\sum}w_{i}-E
\end{equation}
\end{center} 
where $E=\frac{(e-2)}{2}$ .

Now observe that the two constraints define a convex set. In fact, for all $\alpha\in(0,1)$ and considering two elements of this set $x_{i}^{1}$ and $x_{i}^{2}$ satisfying the constraints, we have:
\begin{center}
$\underset{1\leq i\leq k}{\sum}w_{i}\cdot(\alpha x_{i}^{1}+(1-\alpha)x_{i}^{2})=\alpha\underset{1\leq i\leq k}{\sum}w_{i}\cdot x_{i}^{1}+(1-\alpha)\underset{1\leq i\leq k}{\sum}w_{i}\cdot x_{i}^{2}\leq\frac{1}{2}\underset{1\leq i\leq k}{\sum}w_{i}+E$\\
 
$\underset{1\leq i\leq k}{\sum}w_{i}\cdot(\alpha x_{i}^{1}+(1-\alpha)x_{i}^{2})=\alpha\underset{1\leq i\leq k}{\sum}w_{i}\cdot x_{i}^{1}+(1-\alpha)\underset{1\leq i\leq k}{\sum}w_{i}\cdot x_{i}^{2}\geq\frac{1}{2}\underset{1\leq i\leq k}{\sum}w_{i}-E$
\end{center} 

The problem to find a solution in this set is known in literature as “Convex Feasibility Problem” \cite{key-109,key-110,key-111}. It consists in finding a point in the intersection of convex sets. The common way for solving it is the relaxation-projection algorithm \cite{key-111,key-112}.

Howevever we are interested for our goal in proving the existence of a choice of the variables $x_{i}$. Let observe that we may order the terms $w_{i}$ such that: $w_{i}\geq w_{i+1},\forall i=1..k-1$, and then we can determine the index $h$ such that:
\begin{center}

\begin{equation}
\underset{1\leq i\leq h-1}{\sum}w_{i}<\frac{1}{2}\underset{1\leq i\leq k}{\sum}w_{i}
\end{equation}

\begin{equation}
\underset{1\leq i\leq h}{\sum}w_{i}\geq\frac{1}{2}\underset{1\leq i\leq k}{\sum}w_{i}
\end{equation}
\end{center} 

and therefore choose the variables as follows:

\begin{center}
$x_{i}=1,\, i=1..h,\: x_{i}=0,\, i=h+1..k$
\end{center} 

With this choice of the values of the $k$ variables we may have (note that $E>0$ and $k\geq2$ if $e\geq3$):

\begin{center}
$\underset{1\leq i\leq k}{\sum}w_{i}\cdot x_{i}=\underset{1\leq i\leq h}{\sum}w_{i}\geq\frac{1}{2}\underset{1\leq i\leq k}{\sum}w_{i}>\frac{1}{2}\underset{1\leq i\leq k}{\sum}w_{i}-E$
\end{center} 

And therefore the second constraint (12) is satisfied.

Regarding the first constraint (11) it holds if:

\begin{center}
\begin{equation}
\frac {1}{2}\underset{1\leq i\leq k}{\sum} w_{i} \geq \underset{1\leq i\leq h}{\sum} w_{i} - E
\end{equation}
\end{center}

Note that this inequality holds for the number $e=16$ and the choice of the $w= [15, 30, 12] $ (see the sidebar n.5) but not for the number $e=68$ and a choice of the $w = [1155, 4620, 1386, 330, 210]$ (see the sidebar n.1).  Conversely it holds for the choice $w = [3465, 0, 1386, 990, 1050]$ (that is $d=39$).\\
On the other hand let observe that we may order the terms $w_{i}$ such that: $w_{i}\leq w_{i+1},\forall i=1..k-1$, and then we can determine the index $h$ such that:
\begin{center}

\begin{equation}
\underset{1\leq i\leq h}{\sum}w_{i}<\frac{1}{2}\underset{1\leq i\leq k}{\sum}w_{i}
\end{equation}

\begin{equation}
\underset{1\leq i\leq h+1}{\sum}w_{i}\geq\frac{1}{2}\underset{1\leq i\leq k}{\sum}w_{i}
\end{equation}
\end{center} 

and therefore choose the variables as follows:
\begin{center}
$x_{i}=1,\, i=1..h,\: x_{i}=0,\, i=h+1..k$
\end{center} 
With this choice of the values of the $k$ variables we may have (note that $E>0$ and $k\geq2$ if $e\geq3$):
\begin{center}
$\frac{1}{2}\underset{1\leq i\leq k}{\sum}w_{i}>\underset{1\leq i\leq h}{\sum}w_{i}=\underset{1\leq i\leq k}{\sum}w_{i}\cdot x_{i}>\underset{1\leq i\leq k}{\sum}w_{i}\cdot x_{i}-E$
\end{center} 
And therefore the first constraint (11) is satisfied.
Regarding the second constraint (12) it holds if:

\begin{center}
\begin{equation}
\frac {1}{2}\underset{1\leq i\leq k}{\sum} w_{i} \leq \underset{1\leq i\leq h}{\sum} w_{i} + E
\end{equation}
\end{center}

In other terms, both these conditions now obtained  (15 and 18) could define a particular heuristic in finding the appropriate set of $w_{i}$ together with the particular choices of the sign (that is $x_{i}$, as in 13 and 14, or in 16 and 17). We will examine such an algolirithm in a next research.\\


Furthemore, instead of deriving a generic value for the quantity d, it could be required to identify the greatest or the smallest d with the properties we have dicussed in the preceding. The natural way to do this is to write the related optimization problem in terms of integer linear program, and we can see that it is a special case of well known formulation classified in literature as 'Knapsack Problem' \cite{key-106}. In partcular, once chose the numbers $b_{i}$:
\begin{center}
maximize (or minimize) $W_{(1\times k)}X_{(k\times1)}$

subject to:

$WX\leq U$ 

$-WX\leq U'$ 

$x_{i}\in\left\{ 0,1\right\}$
\end{center}

where:
\begin{center}
$U=\frac{1}{2}\underset{1\leq i\leq k}{\sum}w_{i}+E,$ \\
$U'=-\frac{1}{2}\underset{1\leq i\leq k}{\sum}w_{i}+E,$ \\
$W_{(1\times k)}=(w_{1}\:...\: w_{k}),$ \\
$X_{(k\times1)}=(x_{1}\:...\: x_{k})^{T}$.
\end{center}

Given a set of items, each with a weight and a value (in our case they are coincident and the problem is called 'Subset sum problem'), determine the number of each item to include in a collection so that the total weight is less than or equal to a given limit and the total value is as large as possible. It derives its name from the problem faced by someone who is constrained by a fixed-size knapsack and must fill it with the most valuable items. Moreover, considering the two sets of constrains, we observe that various methods are known in literature in order to deal with negative weight \cite{key-107}. Let see as an example the following sidebar where the calculus is provided for the integer 16.


\section{Conclusion and future work}
In the present article we have reconducted the problem of finding a pair of mirror primes respect to a given integer $e>7$, at a well known problems of operations research. We have provided some algorithms to solve it as well. We derived a condition in terms of numbers $w_{i}$ and $e$ that can lead to a more efficient way to choose the appropriate set $w_{i}$. An interesting way of further research is both theoretical and pratical. It will be of high interest the completion of the proof of statement 4 in order to have an upper bound in the number of elements in the search domain. Moreover it will be useful to have a detailed analysis of a more efficient algorithm based on a new heuristic, as stated in the last section, exploiting the conditions (15) and (18).


\newpage
\framebox{\begin{minipage}[t]{1\columnwidth}%

\[
\\e=16
\]

\begin{center}
\begin{tabular}{|c|c|c|c|}
\hline 
$p_{i}$ & $I=I{}_{S} \cup I{}_{R}$ & $\{b_{j}\}_{i=1..k}$ \tabularnewline
\hline 
\hline 
2 & $I{}_{S}$ & $\left\{ \pm 1, \pm 3, \pm 5, ...\right\} $ \tabularnewline
\hline 
3 & $I{}_{R}$ & $\left\{ 0, \pm 3, \pm 6, ...\right\} $ \tabularnewline
\hline 
5 & $I{}_{R}$ & $\left\{0, \pm 2, \pm 3, \pm 5, ...\right\} $ \tabularnewline
\hline 
\end{tabular}
\par\end{center}

\begin{center}
from the CRT:
\par\end{center}

\begin{center}
\begin{tabular}{|c|c|c|}
\hline 
$p{}_{i}$ & $P{}_{i}$ & $P{}_{i}'$\tabularnewline
\hline 
\hline 
2 & 15 & 1 \tabularnewline
\hline 
3 & 10 & 1 \tabularnewline
\hline 
5 & 6 & 1 \tabularnewline
\hline 
\end{tabular}
\par\end{center}

\[
d{}_{h}=\pm b_{1}\cdot15\pm b_{2}\cdot10\pm b_{3}\cdot6; 1\leq h\leq \underset{1\leq j\leq k}{\prod}|\{b_{j}\}_{i}|
\]

\begin{center}
for example consider the following choices:
\par\end{center}

\begin{center}
\[
B=\begin{bmatrix}1 & 0 & 0\end{bmatrix}, d=15 > e-2
\]
 solution not feasible 
\par\end{center}

\begin{center}
\[
B=\begin{bmatrix}-1 & 3 & -2\end{bmatrix}, d=3 \le e-2
\]
 solution feasible
\par\end{center}

\begin{center}
therefore if we can choose $d=3$:
\par\end{center}

\[
16-3=13\:,\: prime
\]
\[
16+3=19\:,\: prime
\]

\begin{center}
so that:
\par\end{center}

\[
13+19=32=2\cdot16
\]
\end{minipage}}
\begin{center}{\label{deepbf}Sidebar 5. How obtain the number $d$ in the case of $e=16$}
 \end{center}



\begin{thebibliography}{10}

\bibitem{key-111}Agmon, S. The relaxation method for linear inequalities.
\textit{Canad. J. Math.}. Vol. 6 (1954), pp. 382-392.

\bibitem{key-1} Manindra Agrawal, Neeraj Kayal, Nitin Saxena. Primes
is in P. \textit{Volume 160, Issue 2 (2004)}. Pages 781-793.


\bibitem{key-101}Tom M. Apostol, \textit{Introduction to Analytic
Number Theory}, (1976) Springer-Verlag, New York.


\bibitem{key-109}F. E. Browder. \textquotedblleft{}Convergence theorems
for sequences of nonlinear operators in Banach spaces,\textquotedblright{}
Mathematische Zeitschrift, vol. 100, no. 3, pp. 201-225, July 1967.

\bibitem{key-14} J.R Chen. On the representation of a large
even integer as the sum of a prime and the product of at most two
primes., \textit{J. Kexue Tongbao 17 (1966)}. 385\textendash{}386.

\bibitem{key-10}J. R. Chen and Y. Wang. On the odd Goldbach
problem. \textit{Acta Math. Sinica, 32 (1989)}. 702--718. 

\bibitem{key-9}Chen, Jing Run and Wang, Tian Ze. The Goldbach
problem for odd numbers. \textit{Acta Math. Sinica (Chin. Ser.)}. 39:2 (1996)
169--174.

\bibitem{key-5}Nikolai G. Chudakov, \textit{On the Goldbach problem},
Doklady Akademii Nauk SSSR 17 (1937): 335\textendash{}338.


\bibitem{key-7}T. Estermann. On Goldbach's problem: proof
that almost all even positive integers are sums of two primes. \textit{Proc.
London Math. Soc.}. 2 44: 307\textendash{}314 (1938). 

\bibitem{key-8}G. H. Hardy and J. E. Littlewood, \textit{Some problems
of `partitio numerorum' : III: on the expression of a number as a
sum of primes}, Acta Math., 44, 1-70 (1923). Reprinted in \textendash{}Collected
Papers of G. H. Hardy,\textendash{} Vol. I, pp. 561-630, Clarendon
Press, Oxford, 1966. 



\bibitem{key-102}Euclid's Element, Book VII, propositions 30 and
32.

\bibitem{key-2}Gauss, Carl Friedrich; Clarke, Arthur A. (translator
into English) (1986), \textit{Disquisitiones Arithemeticae }(Second,
corrected edition), New York: Springer.

\bibitem{key-103}P. Chebyshev \textit{Mémoire sur les nombres premiers},
Mém. Acad. Sci. St. Pétersbourg 7, 17-33, (1850) 1854. Reprinted as
§1-7 in Œuvres de P. L. Tschebychef, Tome I. St. Pétersbourg, Russia:
Commissionaires de l'Academie Impériale des Sciences, pp. 51-64, 1899. 

\bibitem{key-106}Michael R. Garey, David S. Johnson, \textit{Computers
and Intractability: A Guide to the Theory of NP-Completeness}. W.H.
Freeman. (1979).

\bibitem{key-107}F. Glover. A multiphase-Dual Algorithm for
the Zero-One Integer Programming Problem. \textit{Operations Research, Vol.13}.
N.16 (Nov-Dec, 1965).

\bibitem{key-150}Mackworth, A.K., Consistency in networks of relations. \textit{Artificial Intelligence, 8 (1977)}.
pp. 99- 118.

\bibitem{key-110}Maruster, St., Popirlan, C., On the regularity condition
in a convex feasibility problem. \textit{Nonlinear Analysis, Vol. 70 (2009)}.
pp. 1923- 1928.

\bibitem{key-152}Montanari, U., Networks of constraints: fundamental properties and applications to picture processing.  \textit{Information Science, 7 (1974)}.
pp. 95- 132.


\bibitem{key-112} Motzkin, T.S., Schoenberg, I.J. 'The relaxation
method for linear inequalities. \textit{Canad. J. Math., Vol. 6 (1954)}.
pp. 393-404.

\bibitem{key-113}T.Tao, ``Every odd number greater than 1 is the
sum of at most five primes'', eprint arXiv:1201.6656, 01/2012.

\bibitem{key-6}J. G. Van der Corput, \textit{Sur l'hypothèse de Goldbach},
Proc. Akad. Wet. Amsterdam 41 (1938): 76\textendash{}80.

\bibitem{key-4}I.M. Vinogradov. \textit{The Method of Trigonometrical
Sums in the Theory of Numbers}. New York: Interscience (1954).

\bibitem{key-154}Waltz, D., Generating semantic descriptions from drawings of scenes with shadows.\textit{Technical Report AI271. MIT, MA(1972)}.


\bibitem{key-3}Eric W.Weisstein, \textit{Goldbach Conjecture}, \url{http://mathworld.wolfram.com/}.


\end{thebibliography}
\end{document}